\DeclarePairedDelimiter\floor{\lfloor}{\rfloor}
\DeclarePairedDelimiter\abs{\lvert}{\rvert}
\renewcommand{\vec}[1]{\mathbf{#1}}
\newcommand{\Z}{\mathbb{Z}}
\newcommand{\F}{\mathbb{F}}
\newcommand{\K}{\mathbb{K}}
\newcommand{\N}{\mathbb{N}}
\newcommand{\Q}{\mathbb{Q}}
\newcommand{\diag}{\operatorname{diag}}
\newcommand{\Tr}{\operatorname{Tr}}
\newcommand{\tr}{\operatorname{tr}}
\newcommand{\nm}{\mathbf{N}}
\theoremstyle{plain}
\newtheorem{theorem}[equation]{Theorem}
\newtheorem{lemma}[equation]{Lemma}
\newtheorem{corollary}[equation]{Corollary}
\theoremstyle{remark}
\newtheorem{remark}[equation]{Remark}
\theoremstyle{definition}
\newtheorem{example}[equation]{Example}
\renewcommand{\pmod}[1]{\,\,(\operatorname{mod} #1)}
\setlist[enumerate]{leftmargin=*}
\setlist[itemize]{leftmargin=*}
\setlist[enumerate,1]{label=(\alph*),font=\upshape}
\setlist[enumerate,2]{label=(\roman*),font=\upshape}
\let\oldenumerate=\enumerate
	\def\enumerate{
	\oldenumerate
	\setlength{\itemsep}{3pt}
	}
\let\olditemize=\itemize
	\def\itemize{
	\olditemize
	\setlength{\itemsep}{3pt}
	}
\begin{document}
\title[Moments of Gaussian Periods and Modified Fermat Curves]{Moments of Gaussian Periods\\and Modified Fermat Curves}

\author{Stephan Ramon Garcia}
\address{Department of Mathematics and Statistics, Pomona College, 610 N. College Ave., Claremont, CA 91711} 
\email{stephan.garcia@pomona.edu}
\urladdr{\url{https://stephangarcia.sites.pomona.edu/}}

\author{Brian Lorenz}
\address{Department of Astronomy, University of California at Berkeley, 501 Campbell Hall \#3411, Berkeley, CA 94720} 
\email{brian.lorenz@berkeley.edu}

\author{George Todd}
\address{Department of Mathematics, University of Dayton, 300 College Park, Dayton, OH 45469}
\email{gtodd1@udayton.edu}

\begin{abstract}
We use supercharacter theory to study moments of Gaussian periods.  For $p-1=dk$ and fixed $k$, we compute the fourth absolute moments for all but finitely many primes $p$. For $d$ fixed, we relate the fourth absolute moments to the number of rational points on modified Fermat curves. For small $d$, this relation is in terms of a single curve. For larger $d$, we provide both exact formulas using families of modified Fermat curves and bounds via Hasse--Weil.
\end{abstract}

\thanks{First author supported by a David L. Hirsch III and Susan H. Hirsch Research Initiation Grant, the Institute for Pure and Applied Mathematics (IPAM) Quantitative Linear Algebra program, and NSF Grants DMS-1800123 and DMS-2054002}

\keywords{Gaussian period, cyclotomic period, supercharacter, circular pair, moment, power moment, power sum}
\subjclass[2000]{11L05, 11L99, 11T22, 11T23, 11T24}

\maketitle

\section{Introduction} 

The study of Gaussian periods was initiated by Gauss in his \emph{Disquisitiones Arithmeticae}, where he used them to show the constructibility of the heptadecagon \cite{gauss}.  These character sums have many applications, including solutions to congruences over finite fields, the distribution of power residues, pseudo-random number generators \cite{igor}, Kummer's development of arithmetic in the cyclotomic integers~\cite{Kolmogorov}, and the optimized AKS primality test of H.W.~Lenstra and C.~Pomerance~\cite{Agrawal,Lenstra}.
Gaussian periods can also be defined, with some restrictions, for composite moduli.
In this context, striking visualizations have recently emerged \cite{GNGP, GGP,GHM, LutzSolo}, prompting
E.~Eischen and N.~Milnes
 to create an app just for this purpose \cite{GPSoft}.  Similar visual phenomena have 
also been observed in the context of Kloosterman-type sums \cite{GKS} and exponential sums associated to the symmetric group \cite{GNSG}.

Let $p$ be an odd prime, $g$ a fixed primitive root modulo $p$, and $e_p(x) = e^{2\pi i x/ p}$.
A \emph{Gaussian period modulo $p$} is the character sum
\begin{equation*}
    \eta_a = \sum_{j = 0}^{k-1} e_p(g^{dj+a}),
\end{equation*}
in which $p = dk + 1$; see \cite{berndt_evans, gauss_jacobi}.
These are closely related to Gauss sums: if $\chi$ is a character of order $d$ modulo $p$, then the \emph{Gauss sums}
\begin{equation*}
    h(d) = \sum_{n=0}^{p-1} e_p(n^d)
    \qquad \text{and} \qquad
    G(\chi) = \sum_{n=0}^{p-1} \chi(n) e_p(n)
\end{equation*}
satisfy
\begin{equation*}
1+d\eta_0 = h(d) = \sum_{j=1}^{d-1} {G(\chi^j)} \,.
\end{equation*}

If $\K$ is the unique intermediate field of degree $d$ in $\Q(\zeta_p) / \Q$, where $\zeta_p = e_p(1)$, then $\K = \Q(\eta_0)$, so the $d$ periods $\eta_0, \eta_1,\ldots, \eta_{d-1}$ are conjugate and
$\eta_0 = \tr_{\Q(\zeta_p)/\K}(\zeta_p)$.
Consequently, Gaussian periods are also called \emph{cyclotomic periods} or \emph{$k$-nomial periods}.

For $n\in \N$, the \emph{$n$th absolute moment} (of the Gaussian periods modulo $p$) is
\begin{equation}
  V_n(p) = \sum_{s = 0}^{d-1} \left| \eta_s \right|^n.
\end{equation}
The absolute value is present to ensure nonnegativity since Gaussian periods may be non-real.
While the evaluation of the (non-absolute) power moments $\sum_{s=0}^{d-1} \eta_s$ and $\sum_{s=0}^{d-1} \eta_s^2$ is elementary, the third is not: Hurwitz \cite{hurwitz} showed that for $d$ prime
\begin{equation*}
	N(d, p) = \frac{1}{p} \bigg( (p-1)^3 + (p-1)d^2\sum_{i=0}^{d-1} \eta_i^3 \bigg),
\end{equation*}
where $N(d,p)$ is the number of nontrivial solutions to $x^d + y^d + z^d \equiv 0 \pmod p$.
Thus, $N(d,p) > 0$ whenever
\begin{equation*}
	V_3(p) = \sum_{i=0}^{d-1} \abs{\eta_i}^3 < \Big( \frac{p-1}{d} \Big)^2 \,.
\end{equation*}
This can be generalized \cite{gauss_jacobi}:
if $N(n, d, p)$ is the number of nontrivial solutions to $x_1^d + x_2^d+\cdots + x_n^d \equiv 0 \pmod p$, then
\begin{equation*}
  N(n,d,p) = p^{n-1} + \frac{p-1}{pd} \sum_{s=0}^{d-1} (1 + d\eta_s)^n \,.
\end{equation*}

In this note, we use supercharacter theory to reduce the computation of the absolute moments of Gaussian periods to counting the number of $\F_p$-rational points on the modified Fermat curves $ax^d + by^d = cz^d$ over $\mathbb{P}^2(\F_p)$.
The use of supercharacters (see Subsection \ref{Subsection:Supercharacters}) to study Gaussian periods was initiated by Duke, Garcia, and Lutz \cite{GNGP}, and continued by Garcia, Hyde, and Lutz in \cite{GHM, LutzSolo}.

We study $V_n(p)$ from two perspectives: fixed $d$ and fixed $k$.
Such an approach has been undertaken before for related quantities.
In \cite{myerson1,myerson2}, Myerson studied
\begin{equation*}
  \prod_{j=0}^{d-1} \eta_j = \nm_{\K/\mathbb{Q}}\big( \Tr_{\mathbb{Q}(\zeta_p)/\K} \zeta_p \big)
\end{equation*}
for fixed $k$ and fixed $d$ with $p \equiv 1 \pmod 3$.
With respect to cyclotomic field extensions, fixed $d$ corresponds to fixing the degree of $\K/\mathbb{Q}$ and fixed $k$ corresponds to fixing the degree of $\mathbb{Q}(\zeta_p)/\K$.
In terms of supercharacter theory \cite{SESUP}, fixed $d$ corresponds to fixing the number of superclasses and fixed $k$ corresponds to fixing the size of superclasses.
Other results on moments of character sums are in \cite{mrss}.

We need a little more terminology before we state our first result.
A pair $(p,k)$ is \emph{circular} if $dk = p - 1$ and the subgroup $\Gamma = \langle g^d \rangle$ of $\F_p^\times$ of order $k$ satisfies $| ( \Gamma a + b) \cap (\Gamma c + d) | \leq 2$ for all $a, c \in \F_p^\times$ and $b, d \in \F_p$ with $\Gamma a \neq \Gamma c$ or $b \neq d$.
The name ``circular'' is taken from the field of planar nearrings, where
$E_c^r = \left\{ \Gamma r + b \mid b \in \Gamma c \right\}$
is viewed as a set of circles with radius $r$ centered at $\Gamma c$.
The circularity condition ensures that any two distinct ``circles'' in $E_c^r$ intersect in $0$, $1$, or $2$ points \cite{ke_kiechle_jctsa}.
For a fixed $k$, all but finitely many primes $p$ are circular \cite{clay, fong_ke}.

We now state the main results of the paper.
For fixed $k$, we find $V_4(p)= \sum_{s = 0}^{d-1} \left| \eta_s \right|^4$ explicitly for all but finitely many primes $p$.

\begin{theorem}\label{thm:fixk}
  Let $(p, k)$ be circular.
  \begin{enumerate}
  	\item If $2 \mid k$, then $ V_4(p) = 3p(k-1) - k^3$.

  	\item If $2 \nmid k$ and $(p, 2k)$ is circular, then $ V_4(p) = p(2k-1) - k^3$.
  \end{enumerate}
\end{theorem}

For this result, in addition to the method of supercharacters, we use techniques similar to those in \cite{ke_kiechle_pams}, where the authors compute the number of solutions to $x^m + y^m - z^m = 1$ over a finite field given a certain circularity condition.

We use supercharacter theory to compute $V_4(p)$ explicitly for $d=3$ and $d=4$:

\begin{theorem} \label{thm:v4d3}
  If $d = 3$, then
 \begin{equation}
    V_4(p) = \frac{1}{27}\big(10p^2 + 4\left(4 - M_3 \right)p + 1\big),
  \end{equation}
 where $M_3$ is the number of projective $\F_p$-rational points on the Fermat curve $x^3 + y^3 = z^3$.
  Moreover,
 \begin{equation}
    \big| 27V_4(p) - (6p^2 + 12p + 1) \big| \leq 8p^{3/2}.
  \end{equation}
\end{theorem}

The $\delta$-symbol in the next two theorems takes the value $1$ if the condition in the subscript is satisfied, and the value $0$ otherwise.

\begin{theorem}\label{thm:d4p1p8}
  If $d = 4$, then
  \begin{equation}\small
      V_4(p) = \begin{cases}
	  \dfrac{1}{576} \big( p^3 + (191-2M_{4,1})p^2 + (199 - 38M_{4,1}+M_{4,1}^2)p + 9 \big) &\text{if $p \equiv 1 \pmod 8$}, \\[10pt]
	  \dfrac{1}{576} \big( p^3 + (71-2M_{4,2})p^2 + (79+10M_{4,2}+M_{4,2}^2)p + 9 \big) &\text{if $p \equiv 5 \pmod 8$},
      \end{cases}
  \end{equation}
  where $M_{4,1}$ and $M_{4,2}$ are the number of projective $\F_p$-rational points on the curves $x^4 + y^4 = z^4$ and $x^4 + y^4 = g^2z^4$, respectively, in which $g$ is a generator of the unique subgroup of $\F_p^\times$ of order $k$. 
\end{theorem}

For all $d$, we obtain completely explicit bounds on $V_4(p)$; see Figure \ref{Figure:Only}.

\begin{theorem}\label{thm:all_d_intro}
    If $d > 2$, then
\begin{itemize}
  \item[(a)] $V_4(p)   \, \geq \,  \frac{1}{d^3} \big( (\delta_{2d\mid(p-1)} d-1)p+1 \big)^2$ \,,
  \item[(b)] $\small V_4(p)  \leq 
	\begin{cases} 
	    \dfrac{1}{d^3} \big( (d-1)(d^2 -3d + 3)p^2 + 4(d-1)(d-2)p^{3/2} + 6(d-1)p + 1 \big) & \text{if $2d \mid (p-1)$}, \\[8pt]
	    \dfrac{1}{d^3} \big( (d^3-5d^2+8d-3)p^2 + 4(d-2)p^{3/2} + ( \frac{d^3-2d^2-2d+6}{d-1} ) p + 1 \big) & \text{otherwise}.
	    \end{cases}$
\end{itemize}
\end{theorem}

\begin{figure}
 \centering
    \includegraphics[width=0.475\textwidth]{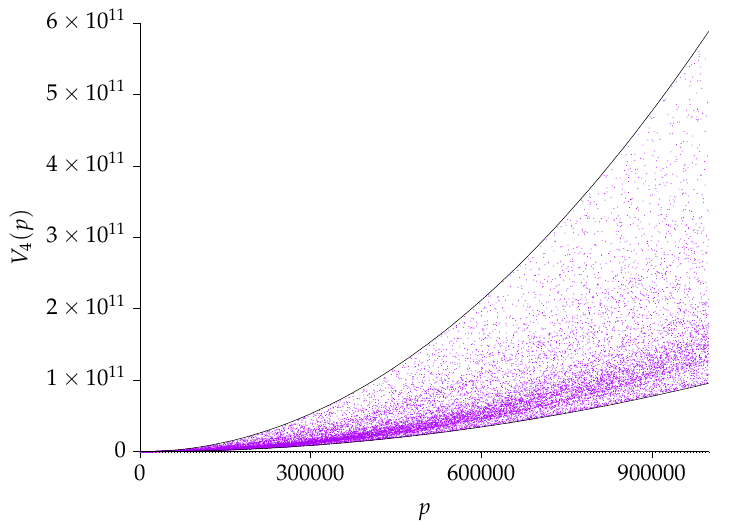}\hfill
        \includegraphics[width=0.475\textwidth]{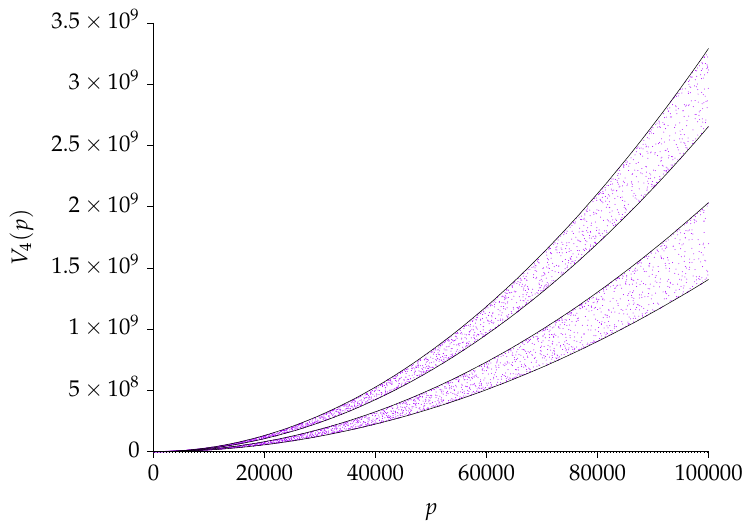}
 \caption{(\textsc{Left}) $V_4(p)$ for $d = 8$ with the bounds from Theorem~\ref{thm:all_d_intro}.
 (\textsc{Right}) $V_4(p)$ for $d = 4$. Lower bounds are from Theorem~\ref{thm:d4p1p8} via \eqref{eq:LowerSpecial} and upper bounds are from Theorem~\ref{thm:all_d_intro} (upper band is the case $p \equiv 1 \pmod 8$ and lower band is the case $p \equiv 5 \pmod 8$).}
\label{Figure:Only}
\end{figure}

\begin{remark}
For $d=4$, Theorem \ref{thm:d4p1p8} and the Hasse--Weil bound yield
\begin{equation}\label{eq:LowerSpecial}
  \frac{1}{64} \big( (3p+1)^2 + 8p(p-3)\delta_{8 \mid (p-1)} \big) \leq V_4(p) \,,
\end{equation}
which is stronger than the lower bound in Theorem \ref{thm:all_d_intro}.  
\end{remark}

The organization of this paper is as follows: Section \ref{Section:Background} contains preliminary results needed for the remainder of the paper. The proof of Theorem~\ref{thm:fixk} is in Section \ref{sec:fapmfk} and proofs of Theorems~\ref{thm:v4d3}, \ref{thm:d4p1p8}, and \ref{thm:all_d_intro} are in Section \ref{Section:FixedD}.

We thank the anonymous referee for many helpful suggestions and corrections.

\section{Background and preliminaries}\label{Section:Background}

\subsection{Background on supercharacters}\label{Subsection:Supercharacters}
Diaconis and Isaacs introduced supercharacters axiomatically in 2008~\cite{Diaconis}, building upon 
work of C.~Andr\'{e}~\cite{An95, An01}. Supercharacters arise in the study of Hopf algebras and symmetric functions~\cite{Aguiar}, random walks on upper triangular matrices~\cite{ACDiSt04}, and Schur rings~\cite{DiTh09, ThVe09, Th10}.
A variety of exponential sums are values of supercharacters on abelian groups \cite{SESUP}.
For example, Ramanujan sums can be given a unified presentation via supercharacters \cite{RSS}.
The first and third authors used supercharacters to establish a sixth-moment formula for Kloosterman sums and
a connection to elliptic curves \cite{K6M}.  This was extended to the eighth moment by Sayed and Kalita \cite{sayed2021eighth}; see also
\cite{Sayed2}.

Let $\Gamma$ be a subgroup of $\mathrm{GL}_d( \Z/n\Z)$ that is closed under the transpose 
and let $X_1,X_2,\ldots,X_N$ denote the orbits in $G=(\Z/n\Z)^d$ under the action of $\Gamma$.
Define
\begin{equation}\label{eq:Supercharacter}
  \sigma_i(\vec{y}) = \sum_{\vec{x} \in X_i} e_n( \vec{x} \cdot \vec{y}),
\end{equation}
where $\vec{x}\cdot \vec{y}$ is the formal dot product of $\vec{x}, \vec{y} \in (\Z/n\Z)^d$ and $e_n(x) = \exp(2\pi i x/n)$.
These are \emph{supercharacters} on $(\Z/n\Z)^d$ and the $X_i$ are \emph{superclasses}.
Supercharacters are constant on superclasses, so we write $\sigma_i(X_j)$ and construct the  $N \times N$ matrix
\begin{equation}\label{eq:USCT}
  U = \frac{1}{\sqrt{n^d}} \left[  \frac{   \sigma_i(X_j) \sqrt{  |X_j| }}{ \sqrt{|X_i|}} \right]_{i,j=1}^N ,
\end{equation}
which is self transpose and unitary \cite[Lem.~1]{SESUP}, \cite[Sect.~2.1]{RSS}.
It represents, with respect to a certain orthonormal basis, the restriction of the discrete Fourier transform 
to the subspace of $L^2(G)$ consisting of functions that are constant on each superclass \cite{DCT}.

The next lemma identifies the matrices diagonalized by the unitary matrix \eqref{eq:USCT}.
The proof is similar to a result from classical character theory \cite[Section 33]{CR62}.
A more general version of this lemma, in which $G$ need not be abelian, is \cite[Thm.~4.2]{RSS}.
The version below is \cite[Thm.~2]{SESUP}.

\begin{lemma}\label{Lemma:SESUP}
  Let $\Gamma = \Gamma^{\mathsf{T}}$ be a subgroup of $\mathrm{GL}_d(\Z/n\Z)$, let $\{X_1,X_2,\ldots,X_N\}$ denote the set of $\Gamma$-orbits in $G = (\Z/n\Z)^d$ induced by the action of $\Gamma$, and let $\sigma_1,\sigma_2,\ldots,\sigma_N$ denote the corresponding supercharacters \eqref{eq:Supercharacter}.
  For each fixed $\vec{z}$ in $X_k$, let $c_{i,j,k}$ denote the number of solutions $(\vec{x}_i,\vec{y}_j) \in X_i \times X_j$ to the equation $\vec{x}+\vec{y} = \vec{z}$.
  \begin{enumerate}
    \item $c_{i,j,k}$ is independent of the representative $\vec{z}$ in $X_k$ that is chosen.
    \item The identity $\displaystyle\sigma_i(X_{\ell}) \sigma_j(X_{\ell}) = \sum_{k=1}^N c_{i,j,k} \sigma_k(X_{\ell})$
      holds for $1\leq i,j,k,\ell \leq N$.

    \item The matrices $T_1,T_2,\ldots,T_N$, whose entries are given by
      \begin{equation*}
        [T_i]_{j,k} = \frac{ c_{i,j,k} \sqrt{ |X_k| } }{ \sqrt{ |X_j|} },
      \end{equation*}
      each satisfy $T_i U = U D_i$,
      in which $\displaystyle D_i = \operatorname{diag}(\sigma_i(X_1), \sigma_i(X_2),\ldots, \sigma_i(X_N) )$.
      In particular, the $T_i$ are simultaneously unitarily diagonalizable.

    \item Each $T_i$ is normal (that is, $T_i^*T_i = T_i T_i^*$) and $\{T_1,T_2,\ldots,T_N\}$ is a basis for the commutative algebra of all $N \times N$ complex matrices $T$ such that $U^*TU$ is diagonal.
    \end{enumerate}
\end{lemma}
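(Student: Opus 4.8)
The plan is to mimic the derivation of the analogous fact for the centre of the group algebra in ordinary character theory (cf.\ \cite[Section 33]{CR62}), working instead with the ``superclass algebra.'' Fix once and for all a representative $\vec{z}_k \in X_k$ and $\vec{w}_\ell \in X_\ell$ for each index. For part (a), I would note that if $\vec{z}, \vec{z}' \in X_k$, then $\vec{z}' = \gamma \vec{z}$ for some $\gamma \in \Gamma$; the linear bijection $\vec{v} \mapsto \gamma \vec{v}$ of $G$ carries each orbit $X_i$ onto itself (orbits are $\Gamma$-stable), and it sends a pair $(\vec{x},\vec{y}) \in X_i \times X_j$ with $\vec{x}+\vec{y} = \vec{z}$ to a pair $(\gamma\vec{x},\gamma\vec{y}) \in X_i \times X_j$ with $\gamma\vec{x}+\gamma\vec{y} = \vec{z}'$. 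This is a bijection between the two solution sets, so $c_{i,j,k}$ is independent of the representative. For part (b), I would expand
\begin{equation*}
  \sigma_i(X_\ell)\sigma_j(X_\ell) = \sum_{\vec{x} \in X_i}\sum_{\vec{y} \in X_j} e\!\left( \frac{(\vec{x}+\vec{y})\cdot \vec{w}_\ell}{n} \right)
\end{equation*}
and sort the pairs by the value $\vec{z} := \vec{x}+\vec{y}$: each $\vec{z}$ lies in a unique orbit $X_k$ and is attained by exactly $c_{i,j,k}$ pairs (using (a)), so the double sum collapses to $\sum_{k=1}^N c_{i,j,k}\sum_{\vec{z} \in X_k} e(\vec{z}\cdot\vec{w}_\ell/n) = \sum_{k=1}^N c_{i,j,k}\sigma_k(X_\ell)$.

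Part (c) I would prove by an entrywise computation. Using the definitions of $U$ in \eqref{eq:USCT} and of $T_i$, together with (b),
\begin{equation*}
  (T_iU)_{j,\ell} = \sum_{k=1}^N \frac{c_{i,j,k}\sqrt{|X_k|}}{\sqrt{|X_j|}}\cdot \frac{\sigma_k(X_\ell)\sqrt{|X_\ell|}}{\sqrt{n^d}\,\sqrt{|X_k|}} = \frac{\sqrt{|X_\ell|}}{\sqrt{n^d}\,\sqrt{|X_j|}}\sum_{k=1}^N c_{i,j,k}\sigma_k(X_\ell) = \frac{\sqrt{|X_\ell|}\,\sigma_i(X_\ell)\sigma_j(X_\ell)}{\sqrt{n^d}\,\sqrt{|X_j|}},
\end{equation*}
and the right-hand side is precisely $U_{j,\ell}\,\sigma_i(X_\ell) = (UD_i)_{j,\ell}$. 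Thus $T_iU = UD_i$, and since $U$ is unitary this rearranges to $U^*T_iU = D_i$ for every $i$; the single matrix $U$ works for all $i$ at once, giving simultaneous unitary diagonalization.

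For part (d), normality of each $T_i$ is immediate because $T_i = UD_iU^*$ is unitarily equivalent to the diagonal matrix $D_i$. The set $\mathcal{A} = \{\, T : U^*TU \text{ is diagonal}\,\}$ is the image of the space of diagonal $N\times N$ matrices under the linear isomorphism $D \mapsto UDU^*$, hence a commutative algebra of dimension $N$ (it is closed under products since a product of diagonal matrices is diagonal). By (c) each $T_i$ lies in $\mathcal{A}$, so it suffices to check the $T_i$ are linearly independent; applying $T \mapsto U^*TU$, this is equivalent to linear independence of the $D_i$, i.e.\ to invertibility of the matrix $M = [\sigma_i(X_j)]_{i,j=1}^N$. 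But \eqref{eq:USCT} exhibits $M = \sqrt{n^d}\,\diag\!\big(\sqrt{|X_1|},\ldots,\sqrt{|X_N|}\big)\, U\, \diag\!\big(1/\sqrt{|X_1|},\ldots,1/\sqrt{|X_N|}\big)$, a product of invertible matrices, so the $T_i$ form a basis of $\mathcal{A}$.

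The computations are all short, and I do not expect a genuine obstacle: this is the superclass-algebra analogue of a classical result, and the transpose-closure hypothesis on $\Gamma$ enters only through the already-established unitarity (and self-transposeness) of $U$. The two places that demand care are the representative-independence in (a), where one must really use that the orbits are $\Gamma$-invariant, and the bookkeeping of the $\sqrt{|X_\bullet|}$ normalizations in the verification $T_iU = UD_i$ in (c).
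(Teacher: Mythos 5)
Your proof is correct, and it takes exactly the route the paper intends: the paper does not reprove this lemma, but defers to its sources (Thm.~1 of the supercharacter paper cited as ``SESUP'' and Thm.~4.2 of ``RSS'') and explicitly describes the argument as the superclass analogue of the classical structure-constant computation for the center of the group algebra, which is what you carry out. All four parts check out, including the two delicate points you flag (orbit-invariance in (a) and the $\sqrt{|X_\bullet|}$ bookkeeping in (c)), and your invertibility argument for $[\sigma_i(X_j)]$ via conjugating $U$ by $\diag(\sqrt{|X_i|})$ correctly establishes that the $T_i$ are a basis.
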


\subsection{Gaussian periods as supercharacters}
We now describe a supercharacter theory that gives rise to Gaussian periods.
The action of $\Gamma = \langle g^d \rangle$ on $\F_p$ produces $d+1$ superclasses
$X_0      =      \Gamma$,
$X_1      =       g\Gamma$,
$X_2      =       g^2\Gamma$,\ldots,     
$X_{d-1}  =    g^{d-1}\Gamma$,
and $X_{d}    =       \{0\}$.
Let $0 \leq i, j \leq d-1$, so that $X_i = g^i \Gamma$.
Choose $y = g^j \in X_j$ and compute:
\begin{align*}
  \sigma_i(X_j)
    &= \sum_{x \in X_i} e_p( xy ) 
    = \sum_{x \in g^i\Gamma} e_p( xy ) 
    = \sum_{z \in y g^i\Gamma} e_p(z) 
    = \sum_{z \in g^{i+j}\Gamma} e_p(z) = \eta_{i+j}.
\end{align*}
This, together with \eqref{eq:USCT}, yields the $(d+1) \times (d+1)$ symmetric unitary matrix
\begin{equation}\label{eq:Ugp}
  U = \small
  \frac{1}{\sqrt{p}}
  \left[
    \begin{array}{cccc|c}
				\eta_0                &  \eta_1                &  \cdots  &  \eta_{d-1}            &  \sqrt{\frac{p-1}{d}}  \\[2pt]
				\eta_1                &  \eta_2                &  \cdots  &  \eta_0                &  \sqrt{\frac{p-1}{d}}  \\[2pt]
				\vdots                &  \vdots                &  \ddots  &  \vdots                &  \vdots                \\[2pt]
				\eta_{d-1}            &  \eta_0                &  \cdots  &  \eta_{d-2}            &  \sqrt{\frac{p-1}{d}}  \\[2pt]
				\hline
				\sqrt{\frac{p-1}{d}}  &  \sqrt{\frac{p-1}{d}}  &  \cdots  &  \sqrt{\frac{p-1}{d}}  &  1                     \\[2pt]
    \end{array}
  \right] \,.
\end{equation}
Setting $D = D_0$ gives
\begin{equation}\label{eq:Dgp}
  D = \diag\big(\eta_0, \eta_1, \ldots, \eta_{d-1}, \tfrac{p-1}{d} \big) \,.
\end{equation}
Finally, let $\alpha$ be the least positive residue of $(p-1)/2$ modulo $d$, and define $T = T_0$.
Then from Lemma~\ref{Lemma:SESUP} we deduce that
\begin{equation}\label{eq:Tgp}
  T=\small
  \left[
    \begin{array}{cccc|c}
				c_{0,0,0}             &      c_{0,0,1}       &       \cdots  &  c_{0,0,d-1}       &  0\\
				c_{0,1,0}             &      c_{0,1,1}       &       \cdots  &  c_{0,1,d-1}       &  0\\
				\vdots                &      \vdots          &       \ddots  &  \vdots            &  \vdots\\
				c_{0,\alpha,0}        &      c_{0,\alpha,1}  &       \cdots  &  c_{0,\alpha,d-1}  &  \sqrt{\frac{p-1}{d}}\\
				\vdots                &      \vdots          &       \ddots  &  \vdots            &  \vdots\\
				c_{0,d-1,0}           &      c_{0,d-1,1}     &       \cdots  &  c_{0,d-1,d-1}     &  0\\
				\hline
				\sqrt{\frac{p-1}{d}}  &      0               &       0       &  0                 &  0\\
    \end{array}
  \right]
\end{equation}
is a real, normal matrix; Lemma~\ref{le:Tsym1} below shows that $T$ is symmetric if $2d | (p-1)$.
This occurs if and only if its eigenvalues, the diagonal entries of \eqref{eq:Dgp}, are real.
The eigenvalues of $T$ are $(p-1)/d$ and the Gaussian periods in \eqref{eq:Dgp}.

\begin{example}
For $d=2$, we can compute all absolute moments.
Recall that for $d=2$, \cite[Sec.~4.4]{SESUP} provides
\begin{equation}
  T =
  \begin{cases}
    \begin{bmatrix}
      \frac{p-5}{4} & \frac{p-1}{4} & \sqrt{\frac{p-1}{2}} \\
      \frac{p-1}{4} & \frac{p-1}{4} & 0 \\
      \sqrt{\frac{p-1}{2}} & 0 & 0
    \end{bmatrix} &\text{if $p \equiv 1 \pmod 4$},\\[25pt]
    \begin{bmatrix}
      \frac{p-3}{4} & \frac{p+1}{4} & 0 \\
      \frac{p-3}{4} & \frac{p-3}{4} & \sqrt{\frac{p-1}{2}} \\
      \sqrt{\frac{p-1}{2}} & 0 & 0
    \end{bmatrix} &\text{if $p \equiv 3 \pmod 4$},
  \end{cases}
\end{equation}
with eigenvalues $(p-1)/4$ and
\begin{equation}
  \eta_0 =
  \begin{cases}
    \dfrac{-1 \pm \sqrt{p}}{2} &\text{if $p \equiv 1 \pmod 4$}, \\[8pt]
    \dfrac{-1 \pm i\sqrt{p}}{2} &\text{if $p \equiv 3 \pmod 4$},
  \end{cases}\qquad
  \eta_1 =
  \begin{cases}
    \dfrac{-1 \mp \sqrt{p}}{2} &\text{if $p \equiv 1 \pmod 4$}, \\[8pt]
    \dfrac{-1 \mp i\sqrt{p}}{2} &\text{if $p \equiv 3 \pmod 4$}.
  \end{cases}
\end{equation}
Thus,
\begin{equation*}
  V_1(p) = |\eta_0| + |\eta_1| =
  \begin{cases} 
    \sqrt{p} &\text{if $p \equiv 1 \pmod 4$}, \\[5pt]
    \sqrt{p+1} &\text{if $p \equiv 3 \pmod 4$}.
  \end{cases}
\end{equation*}
More generally, using the binomial theorem, 
\begin{equation*}
    V_n(p) = \sum_{s=0}^1 |\eta_s|^n = |\eta_0|^n + |\eta_1|^n =
  \begin{cases}
   \displaystyle 2^{1-n} \sum_{j=0}^{\floor*{\frac{n}{2}}} \binom{n}{2j} p^{\frac{n}{2}-j} &\text{if $p \equiv 1 \pmod 4$}, \\[12pt]
    2^{1-n} \left( p+1 \right)^{n/2} &\text{if $p \equiv 3 \pmod 4$},
  \end{cases}
\end{equation*}
so $V_n(p) \sim 2^{1-n} p^{n/2}$ as $p \to \infty$.
\end{example}

\subsection{Structure of \texorpdfstring{$T$}{T}}
The real, normal matrix $T$ defined by \eqref{eq:Tgp} plays a central role in what follows.
It possesses a number of symmetries, some of which are subtle.

\begin{lemma}\label{le:Tsym1}
Let $T$ denote the matrix \eqref{eq:Tgp}.
 \begin{enumerate}
    \item If $d > 1$ and $2d \mid (p - 1)$, then $T$ is symmetric.

    \item For all $m$ and $n$ with $0 \leq m, n \leq d-1$,
         \begin{equation*}
        c_{0,m,n} = c_{0,a, b},
         \end{equation*}
	 where $a$ and $b$ are the least positive residues modulo $d$ of $-m$ and $n-m$, respectively.
	 We write $c_{0,m,n} = c_{0,-m,n-m}$ when no confusion can occur.

       \item If $d > 1$ and $j = (p-1)/2$, then for all $m$ and $n$ with $0 \leq m,n \leq d-1$,
         \begin{equation*}
        c_{0,m,n} = c_{0,a, b},
         \end{equation*}
	 where $a$ and $b$ are the least positive residues modulo $d$ of $n+j$ and $m+j$, respectively.
	 Similarly, we write $c_{0,m,n} = c_{0,n+j,m+j}$ when no confusion can occur.
  \end{enumerate}
\end{lemma}

\begin{proof}
  All equalities are over $\F_p$.

\smallskip\noindent(a) Recall that $\alpha$ is the least positive residue of $(p-1)/2$ modulo $d$.
  In this case, $\alpha = 0$, so we must show that $c_{0,m,n} = c_{0,n,m}$ when $0 \leq m, n \leq d-1$.
  Observe that $c_{0,m,n}$ is the number of solutions to
 $a + g^mb = g^n$ with $a,b \in \Gamma$; that is,
 $(-ab)^{-1} + (b^{-1})g^n = g^m$.
 Thus, $c_{0,m,n} = c_{0,n,m}$ if $-1 = g^{(p-1)/2} \in \Gamma$.

\smallskip\noindent(b)  
Observe that $c_{0,m,n}$ is the number of solutions to
$a + g^mb = g^n$, where $a,b \in \Gamma$; this is the number of solutions to
$ag^{-m} + b = g^{n-m}$, so (b) follows.
 
\smallskip \noindent(c)  
  Note that $c_{0,m,n}$ is the number of solutions to
$a + g^mb = g^n$ with $a, b \in \Gamma$;
this is the number of solutions to $ab^{-1} - g^n b^{-1} = -g^m$,
 or, equivalently, to
 $ab^{-1} + g^{n+(p-1)/2} b^{-1} = g^{m + (p-1)/2}$, so (c) holds.
\end{proof}

\subsection{Some lemmas}

We use the following lemma frequently in what follows.
\begin{lemma}\label{le:colsum}
    Let $0 \leq n \leq d-1$, then $\sum_{i=0}^d c_{0,i,n} = \sum_{i=0}^d c_{0,n,i} = \frac{p-1}{d}$.
\end{lemma}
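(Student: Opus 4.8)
The plan is to interpret the sum $\sum_{i=0}^d c_{0,i,n}$ combinatorially using the definition of the structure constants $c_{i,j,k}$ from Lemma~\ref{Lemma:SESUP}. Fix a representative $z \in X_n$; by part (1) of that lemma the constants $c_{0,i,n}$ do not depend on this choice, so the sum is well-defined. By definition, $c_{0,i,n}$ counts the pairs $(x,y) \in X_0 \times X_i$ with $x + y = z$. Summing over $i$ from $0$ to $d$ therefore counts \emph{all} pairs $(x,y) \in X_0 \times \F_p$ with $x + y = z$, since the superclasses $X_0, X_1, \ldots, X_d$ partition $\F_p = (\Z/p\Z)^1$ (here $G = \F_p$, the $d=1$-dimensional case, and the orbits are $\Gamma, g\Gamma, \ldots, g^{d-1}\Gamma, \{0\}$).

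First I would make this partition explicit: every $y \in \F_p$ lies in exactly one $X_j$, so
\begin{equation*}
  \sum_{i=0}^d c_{0,i,n} = \#\{(x,y) \in X_0 \times \F_p : x + y = z\}.
\end{equation*}
Next, observe that for each choice of $x \in X_0$ there is exactly one $y \in \F_p$ satisfying $x + y = z$, namely $y = z - x$, and this $y$ automatically lies in some superclass. Hence the right-hand side equals $|X_0|$. Finally, $|X_0| = |\Gamma| = k = (p-1)/d$ by the definition of $\Gamma = \langle g^d \rangle$ as the subgroup of $\F_p^\times$ of order $k$, which gives the claimed value.

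There is essentially no obstacle here; the only point requiring a word of care is the bookkeeping at the boundary index $i = d$, i.e.\ remembering that $X_d = \{0\}$ must be included so that the superclasses genuinely exhaust $\F_p$ (the value $z - x = 0$ can occur, precisely when $x = z$, which is possible since $z$ may or may not lie in $X_0$). Once one notes that $c_{0,d,n}$ correctly counts the pair with $y = 0$ when applicable, the union $X_0 \cup X_1 \cup \cdots \cup X_d = \F_p$ is exact and the count is clean. No appeal to the unitarity of $U$ or to parts (2)--(4) of Lemma~\ref{Lemma:SESUP} is needed.
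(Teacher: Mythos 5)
Your proof is correct and is essentially identical to the paper's: both fix a representative $z \in X_n$, interpret each $c_{0,i,n}$ as counting pairs $(x,y) \in X_0 \times X_i$ with $x+y=z$, and use the fact that $X_0, \ldots, X_d$ partition $\F_p$ so that the sum counts all of $X_0$ via $y = z - x$. Your extra remark about including the class $X_d = \{0\}$ is a fine point of bookkeeping that the paper leaves implicit.
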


\begin{proof}
  By Lemma~\ref{Lemma:SESUP}, $c_{0,m,n}$ is the number of solutions to $x + y = z$ with $x \in X_0$ and $y \in X_m$, where $z \in X_n$ is fixed. 
  Since $\{X_0, X_1, \ldots, X_d\}$ is a partition of $\F_p$,
  \begin{equation*}
    \sum_{i=0}^d c_{0,i,n} = 
    \big|
      \{ (x,y) \in X_0 \times \F_p : x + y = g^n \} \big|
    = \abs{X_0} = \frac{p-1}{d}.  
  \end{equation*}
  The equality $\sum_{i=0}^d c_{0,i,n} = \sum_{i=0}^d c_{0,n,i}$ follows from Lemma~\ref{le:Tsym1}.
\end{proof}

\begin{lemma}\label{thr:v2}
  If $d > 1$, then $V_2(p) = \frac{(d-1)p + 1}{d}$.
\end{lemma}

\begin{proof}
  Since $U$ is unitary,
  $\sum_{j=0}^{d-1} \left| \eta_j \right|^2 + \frac{p-1}{d} = p$.  Thus,
  \begin{equation*}
    V_2(p) = \sum_{j=0}^{d-1} \left| \eta_j \right|^2 = p - \frac{p-1}{d} = \frac{(d-1)p + 1}{d}. \qedhere
  \end{equation*}
\end{proof}

\begin{corollary}
  If $d > 1$, then $\sqrt{ \frac{(d-1)p +1}{d}} \leq V_1(p) \leq \sqrt{(d-1)p + 1}$.
\end{corollary}

\begin{proof}
 Since
 \begin{equation*}
      V_1(p)^2 
      = \bigg( \sum_{i=0}^{d-1} \left| \eta_i \right| \bigg)^2 
               = \sum_{i=0}^{d-1} \left| \eta_i \right| ^2 + \sum_{0 \leq j < k \leq d-1} \left| \eta_j \eta_k \right| 
               = V_2(p) + \sum_{0 \leq j < k \leq d-1} \left| \eta_j \eta_k \right|,
  \end{equation*}
  we get $\sqrt{V_2(p)} \leq V_1(p)$.  Lemma~\ref{thr:v2} now gives the desired lower inequality.
  Use Cauchy--Schwarz and Lemma~\ref{thr:v2} to obtain the upper inequality:
  \begin{align*}
    \bigg| \sum_{i=0}^{d-1} | \eta_i | \cdot 1 \bigg|  
    &= \bigg( \sum_{j=0}^{d-1} |\eta_j|^2 \bigg)^{1/2} \bigg(  \sum_{k=0}^{d-1} |1|^2   \bigg)^{1/2} 
    \leq \bigg( d\sum_{j=0}^{d-1} |\eta_j|^2 \bigg)^{1/2} \\
    &= \bigg( d \left( \frac{(d-1)p + 1}{d}\right) \bigg)^{1/2} 
    = \sqrt{(d-1)p + 1}. \qedhere
  \end{align*}
\end{proof}

The entries of $T$ are related to the number of points on modified Fermat curves.

\begin{lemma}\label{le:T}
  Let $0 \leq i,j,k < d$ and let $M_{i,j,k}$ be the number of $\F_p$-rational points $(x,y,z)$
  on the projective modified Fermat curve 
 \begin{equation} \label{eq:fermatcong}
    g^i x^d + g^j y^d = g^k z^d,
  \end{equation}
 defined over $\mathbb{P}^2(\F_p)$. Then
 \begin{equation}
    M_{i,j,k} = d^2 c_{i,j,k} + d(\delta_{j,k} + \delta_{i,k} + \delta_*),
  \end{equation}
 where
 \begin{equation*}
    \delta_* =
    \begin{cases}
      1 & \text{if $\frac{p-1}{2} \equiv i -j \pmod{d}$}, \\
      0 & \text{otherwise}.
    \end{cases}
 \end{equation*}
\end{lemma}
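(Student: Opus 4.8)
The plan is to count the $\F_p$-rational points on the projective curve \eqref{eq:fermatcong} by stratifying $\mathbb{P}^2(\F_p)$ according to which coordinates vanish, and then to match the count of "generic" points (those with $xyz \neq 0$) against the definition of $c_{i,j,k}$ as the number of solutions to $g^i a + g^j b = g^k$ with $a,b \in \Gamma = \langle g^d\rangle$. First I would deal with the points on the coordinate axes: since $(x:y:z)$ with, say, $z = 0$ forces $g^i x^d + g^j y^d = 0$, and similarly for $x = 0$ or $y = 0$, each such case reduces to counting $d$th-power solutions to a single binomial equation. The upshot is that the curve passes through a coordinate point precisely when the relevant ratio of the coefficients $g^i, g^j, g^k$ is a $d$th power in $\F_p^\times$, i.e.\ when the exponents are congruent modulo $d$; this is the source of the three Kronecker deltas $\delta_{j,k}$, $\delta_{i,k}$ (from $x=0$ and $y=0$) and $\delta_*$ (from $z=0$, where the condition $i - j \equiv (p-1)/2 \pmod d$ encodes that $-1$ times a ratio is a $d$th power, using that $-1 = g^{(p-1)/2}$). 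I would also account for the multiplicity: when such a point exists, the number of projective solutions of the binomial on that axis is exactly $d$ (the number of $d$th roots of unity in $\F_p^\times$, since $d \mid p-1$), which explains the factor of $d$ in front of the deltas.

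Next I would handle the generic points with $xyz \neq 0$. Here I can normalize $z = 1$ and rescale, and the equation $g^i x^d + g^j y^d = g^k$ with $x, y \in \F_p^\times$ has its solutions partitioned by the cosets of $\Gamma$ that $x^d$ and $y^d$ land in. Writing $x^d = a$ and $y^d = b$, each pair $(a,b) \in \Gamma \times \Gamma$ with $g^i a + g^j b = g^k$ — of which there are $c_{i,j,k}$ by Lemma~\ref{Lemma:SESUP} — lifts to exactly $d \cdot d = d^2$ choices of $(x,y)$, since each of $x$ and $y$ has $d$ preimages under the $d$th-power map on $\F_p^\times$. This gives the main term $d^2 c_{i,j,k}$. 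A subtlety to check carefully is that the affine count with $z = 1$ already captures all generic projective points exactly once (projective scaling is absorbed by the normalization $z = 1$ since $z \neq 0$), so no further division is needed.

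Assembling these contributions gives $M_{i,j,k} = d^2 c_{i,j,k} + d(\delta_{j,k} + \delta_{i,k} + \delta_*)$, which is the claimed identity. The main obstacle, and the place I would spend the most care, is the bookkeeping at the coordinate points: one must verify that the three axis contributions are genuinely disjoint from each other and from the generic locus (they are, since the axes pairwise intersect only at the three vertices $(1:0:0)$, $(0:1:0)$, $(0:0:1)$, and one checks directly that those three vertices never lie on \eqref{eq:fermatcong} because each would force one of $g^i, g^j, g^k$ to be zero), and that the "exists a point on this axis" condition is correctly translated into the stated congruences modulo $d$ — in particular getting the direction of the $(p-1)/2$ shift right for $\delta_*$, which hinges on tracking the sign coming from moving $g^k z^d$ to the other side. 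Once the stratification is clean, the arithmetic is routine given that $d \mid p - 1$.
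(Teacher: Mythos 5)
Your proposal is correct and follows essentially the same route as the paper's proof: split off the locus $xyz=0$, observe that each coordinate axis contributes $d$ points exactly when the corresponding exponent congruence modulo $d$ holds (with $-1 = g^{(p-1)/2}$ producing the $\delta_*$ condition), and note that each solution $(a,b)\in\Gamma\times\Gamma$ of $g^ia+g^jb=g^k$ lifts to $d^2$ affine points. The extra bookkeeping you flag (disjointness of the strata and the vertices not lying on the curve) is correct and only makes explicit what the paper leaves implicit.
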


\begin{proof}
  Since $c_{i,j,k}$ is the number of solutions to $g^i a + g^j b = g^k$ with $a, b \in \Gamma = \langle g^d \rangle$, 
  we have $M_{i,j,k} = d^2 c_{i,j,k} + S$, where $S$ is the number of $\F_p$-rational solutions to \eqref{eq:fermatcong} with $xyz = 0$.
  If $x = 0$, then $(y/z)^d = g^{k-j}$ has a solution if and only if $j = k$.
  If $y = 0$, then $(x/z)^d = g^{k-i}$ has a solution if and only if $k = i$.
  Finally, if $z = 0$, then $(x/y)^d = g^{(p-1)/2 + j-i}$ has a solution if and only if $(p-1)/2 \equiv i - j \pmod d$.
  In each case, there are exactly $d$ such $\F_p$-rational solutions.
\end{proof}

Lastly, we recall the well-known bound of Hasse and Weil \cite{Hasse,Weil}:
\begin{theorem}[Hasse--Weil]\label{thm:hw}
 Let $E$ be a smooth, irreducible, projective curve of genus $\nu$ defined over a finite field $\F_q$.
  If $M$ is the number of $\F_q$-rational points on $E$, then
 \begin{equation*}
    | M - (q+1) | \leq 2 \nu \sqrt{q}.
 \end{equation*}
\end{theorem}

The Hasse--Weil bound and Lemma~\ref{le:T} can be used to obtain estimates on $c_{i,j,k}$, the number of solutions to $g^i a + g^j b = g^k$, in which $a, b \in \Gamma$.
In particular, the curve $M_{i,j,k}$ in Lemma~\ref{le:T} is a smooth, irreducible, projective plane curve of degree $d$, so its genus is $(d-1)(d-2)/2$ by the Pl\"{u}cker formula  \cite[Ch.~2, Sec.~4]{griffiths_harris}.

\section{Fourth absolute moment for fixed \texorpdfstring{$k$}{k}} \label{sec:fapmfk} 

In this section, we assume $k = (p-1)/d$ is fixed.
This corresponds to fixing the size of the superclasses instead of the 
number of superclasses (fixed $d$).
For a fixed $k$, we can calculate $V_4(p)$ explicitly for all but finitely many primes $p$.

\subsection{Circular pairs}
Let $\Phi$ be the subgroup of $\F_p^\times$ of order $k$.
Then $(p, k)$ is \emph{circular} if $k \mid (p-1)$ and
\begin{equation}
  |(\Phi a + b) \cap (\Phi c + d)| \leq 2
\end{equation}
for all $a, c \in \F_p^\times$ and $b, e \in \F_p$ with $\Phi a \neq \Phi c$ or $b \neq e$.
For a fixed $k$, the set of non-circular pairs $(p,k)$ is finite \cite{clay, fong_ke}.
More background on this material is in \cite{ke_kiechle_jctsa}. 

Here are some examples for those not familiar with the circularity condition.

\begin{example}
The pair $(p, 2)$ is circular for all odd primes $p$ since $\Phi = \{1, -1\}$.
\end{example}

\begin{example}
The pair $(p,3)$ is circular for all odd primes $p$.
  Fix a primitive root $g$ modulo $p$.
  Then $d = (p-1)/3$ and $\Phi = \{ 1, g^d, g^{2d}\}$.
  Suppose toward a contradiction that there exists $a, c \in \F_p^\times$ and $b, e, \in \F_p$ such that
 $\Phi a + b = \Phi c + e$,
  and $b \neq e$ or $\Phi a \neq \Phi c$.
  There are six cases to consider in $\Phi a + b = \Phi c + e$.
  Suppose that $a + b = cg^d + e$, $ag^d + b = cg^{2d}+e$, and $ag^{2d} + b = c + e$.
  Subtracting the first two equations gives
 $a(g^d - 1) = c(g^{2d} - g^d)$, so
 $a = cg^d$.
  Since $a + b = cg^d = e$, we have $b = e$ and hence $\Phi a = \Phi c$, a contradiction.
  The other five cases are similar.
\end{example}

\begin{example}
We claim that $(p, k) = (5, 4)$ is not circular. Let $a = 1$, $b = 0$, $c = 1$, and $e = 3$.
  In this case, $\Phi = \F_5^\times = \{ 1, 2, 3, 4 \}$.
	Moreover, $\Phi a + b = \Phi$ and $\Phi c + e = \{ 4, 5, 6, 7 \} = \{ 0, 1, 2, 4 \}$, so 
	$|(\Phi a + b) \cap (\Phi c + e) | = 3$, but $b \neq e$.
  Thus, $(p, k) = (5, 4)$ is not circular.
\end{example}

We now relate circularity with the diagonal entries of the $T$ matrix in \eqref{eq:Tgp}.

\begin{lemma}\label{le:fixkfirstcol}
  If $(p, k)$ is circular and $0 \leq m \leq d-1$, then $c_{0,m,m} \in \{0, 1, 2\}$ and $c_{0,m,m} = | X_m \cap (X_m + 1) |$.
\end{lemma}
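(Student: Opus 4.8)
The plan is to prove the identity $c_{0,m,m}=|X_m\cap(X_m+1)|$ by an explicit bijection, and then to deduce $c_{0,m,m}\in\{0,1,2\}$ by recognizing $X_m$ and $X_m+1$ as two of the ``circles'' whose intersections circularity controls.

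First I would reformulate the left side. Since $0\le m\le d-1$, the superclass $X_m=g^m\Gamma$ is nonempty and contained in $\F_p^\times$. Fix any $z\in X_m$. By Lemma~\ref{Lemma:SESUP}, $c_{0,m,m}$ equals the number of pairs $(x,y)\in X_0\times X_m$ with $x+y=z$, and this count does not depend on the chosen $z$. The key trick is a ``normalization'': since $x\in X_0=\Gamma\subseteq\F_p^\times$, I can divide the equation $x+y=z$ by $x$ to get $1+x^{-1}y=x^{-1}z$. Setting $w=x^{-1}z$, this reads $w-1=x^{-1}y$; and because $x^{-1}\in\Gamma$ with $\Gamma X_m=X_m$, both $w$ and $w-1$ lie in $X_m$, i.e.\ $w\in X_m\cap(X_m+1)$. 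So $\phi(x,y):=x^{-1}z$ defines a map from the solution set into $X_m\cap(X_m+1)$.

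Next I would check that $\phi$ is a bijection onto $X_m\cap(X_m+1)$. Injectivity is immediate, since $\phi(x,y)=x^{-1}z$ determines $x$ and then $y=z-x$. For surjectivity, given $w\in X_m\cap(X_m+1)$, put $x=zw^{-1}$; as $z$ and $w$ lie in the \emph{same} coset $g^m\Gamma$ of $\Gamma$, their ratio lies in $\Gamma=X_0$, so $x\in X_0$. Then $y:=z-x=x(w-1)$, and $x\in\Gamma$ with $w-1\in X_m$ give $y\in\Gamma X_m=X_m$, with $x+y=z$ and $\phi(x,y)=w$. This yields $c_{0,m,m}=|X_m\cap(X_m+1)|$. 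For the bound, write $\Phi=\Gamma$ (both of order $k$): then $X_m=\Phi g^m$ is the translate $\Phi g^m+0$ and $X_m+1=\Phi g^m+1$, so these are circles $\Phi a+b$ and $\Phi c+e$ with $a=c=g^m\in\F_p^\times$, $b=0$, $e=1$. Since $b\neq e$, circularity gives $|X_m\cap(X_m+1)|\le 2$, and being a cardinality it lies in $\{0,1,2\}$; hence $c_{0,m,m}\in\{0,1,2\}$.

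I do not anticipate a serious obstacle; the argument is essentially bookkeeping. The two points requiring care are: keeping all inverted quantities nonzero (each of $x$, $w$, $zw^{-1}$ is invertible precisely because every superclass other than $X_d=\{0\}$ sits in $\F_p^\times$), and noting that the surjectivity step genuinely uses $z$ and $w$ lying in the same $\Gamma$-coset — which is exactly the reason this clean description applies to the diagonal entries $c_{0,m,m}$ and not to general $c_{0,m,n}$.
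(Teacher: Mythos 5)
Your proof is correct and follows essentially the same route as the paper: rewrite the defining equation $x+y=z$ by dividing through by the $\Gamma$-coordinate so that the solutions biject with $X_m\cap(X_m+1)$, then apply circularity to the two distinct circles $\Phi g^m+0$ and $\Phi g^m+1$. The only cosmetic difference is that you set up the bijection directly at the level of the superclass counts $c_{0,m,m}$, whereas the paper passes through the affine point count on $x^d+g^my^d=g^m$ and divides by $d^2$; your version is slightly cleaner bookkeeping but not a different argument.
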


\begin{proof}
  Let $\Phi = X_0 = \Gamma = \langle g^d \rangle$.
  By definition, $c_{0,m,m}$ is the number of solutions to $a + g^m b = g^m$ with $a, b \in \Phi$.
  If we let
 \begin{equation*}
    t'_m = \big| \{ (x,y) : x^d + g^m y^d = g^m, xy \neq 0 \} \big|,
 \end{equation*}
 then $t'_m = c_{0,m,m} d^2$.
  A rearrangement argument reveals that
 \begin{equation}
    t'_m = \big| \{ (x,y) : g^mx^d - g^my^d = 1, xy \neq 0\} \big|.
  \end{equation}
 If $g^m x^d = g^m y^d + 1$, then $g^mx^d \in \Phi g^m \cap (\Phi g^m + 1)$.
  Define $t = |\Phi g^m \cap (\Phi g^m + 1)|$.
  Since $\Phi$ is the group of $d$th power residues, it follows that $t'_m = td^2$, so $c_{0,m,m} = t$.
  Since $(p, k)$ is circular, we conclude that $t \in \{0, 1, 2\}$.
\end{proof}

Before we prove the next lemma, which uses a result from the field of planar nearrings, we provide a brief background on some notation used in the reference \cite{ke_kiechle_jctsa}.
As in the introduction, and noting that $\Gamma = \Phi$, consider $E_h^{g^m} = \{\Phi g^m + b \mid b \in \Phi h \}$.
We now construct a graph on $E_h^{g^m}$, $G$, with vertex set $V(G) = \Phi h$ and with edge set given by:
\[
	E(G) = \{ h_1 h_2 \mid h_1, h_2 \in \Phi h, h_1 \neq h_2, (\Phi g^m + h_1) \cap (\Phi g^m + h_2) \neq \varnothing \} \,.
\]
If $(p,k)$ is a circular pair, then $(\Phi g^m + h_1) \cap (\Phi g^m + h_2) \leq 2$ for $h_1 \neq h_2$.
Thus for any $h_1, h_2 \in V(G)$ with $h_1 \neq h_2$ and $h_1 h_2 \in E(G)$, the cardinality of $(\Phi g^m + h_1) \cap (\Phi g^m + h_2)$ is either $1$, in which case we call $h_1 h_2$ an \emph{odd} edge or it is $2$, in which case we call $h_1 h_2$ an \emph{even} edge.
Our notation for the graph associated to $E_h^{g^m}$ clashes with that used in \cite{ke_kiechle_jctsa}, so we make some further notes.
Recall that we have defined $\Gamma = \langle g^d \rangle$, whereas \cite{ke_kiechle_jctsa} use $\Gamma(E_h^{g^m})$ for what we have called $G$ above.
Moreover, the following dictionary between the usage in \cite{ke_kiechle_jctsa} on the left-hand side our usage on the right-hand side may be of use: $\phi \leftrightarrow g^d, r \leftrightarrow g^m, c \leftrightarrow h$.

\begin{lemma}\label{le:fixedkdiag}
Suppose that $(p, k)$ is circular with $2 \mid k$ and $0 \leq m \leq d - 1$. 
Then $c_{0,m,m} = 1$ if and only if $2^{-1} \in X_m$.
\end{lemma}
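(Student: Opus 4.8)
The plan is to build directly on Lemma~\ref{le:fixkfirstcol}, which already identifies $c_{0,m,m}$ with $|X_m \cap (X_m+1)|$ and guarantees, via circularity, that this cardinality lies in $\{0,1,2\}$. Writing $S_m = X_m \cap (X_m+1)$, it therefore suffices to show that $|S_m| = 1$ precisely when $2^{-1} \in X_m$, and the natural way to do this is a fixed-point/involution argument on the small set $S_m$.

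First I would extract the one structural consequence of the hypothesis $2 \mid k$: since $\F_p^\times$ is cyclic and $\Gamma = \langle g^d \rangle$ is its subgroup of order $k$ with $k$ even, $\Gamma$ contains the unique element of order $2$ in $\F_p^\times$, namely $-1$. Hence $-\Gamma = \Gamma$, and so $-X_m = X_m$ for every $m$. This is the only place circularity-plus-parity enters, beyond the cardinality bound inherited from Lemma~\ref{le:fixkfirstcol}.

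Next comes the core step: the map $z \mapsto 1 - z$ is an involution of $S_m$. Indeed, if $z \in S_m$ then $z \in X_m$ and $z - 1 \in X_m$; using $-X_m = X_m$ we get $1 - z = -(z-1) \in X_m$ and $(1 - z) - 1 = -z \in X_m$, so $1 - z \in S_m$, and applying the map twice returns $z$. Because $p$ is odd, this involution has exactly one fixed point, $z = 2^{-1}$, and since $S_m \subseteq X_m$ one checks that $2^{-1} \in S_m$ is equivalent to $2^{-1} \in X_m$ (the backward direction uses $2^{-1} - 1 = -2^{-1} \in X_m$, again via $-1 \in \Gamma$).

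Finally I would assemble the equivalence. If $c_{0,m,m} = |S_m| = 1$, its single element is fixed by the involution, hence equals $2^{-1}$, giving $2^{-1} \in X_m$. Conversely, if $2^{-1} \in X_m$ then $2^{-1} \in S_m$, so $|S_m| \geq 1$; and $|S_m|$ cannot equal $2$, because an involution of a two-element set that fixes one point fixes the other as well, which would force the second element of $S_m$ to coincide with $2^{-1}$. By Lemma~\ref{le:fixkfirstcol} the only remaining value is $|S_m| = 1$, i.e.\ $c_{0,m,m} = 1$. I do not anticipate a serious obstacle here; the one point requiring care is verifying that $z \mapsto 1-z$ genuinely maps $S_m$ into itself, which is exactly where the hypothesis $2 \mid k$ (through $-1 \in \Gamma$) is essential, and without which the conclusion fails.
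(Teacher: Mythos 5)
Your proof is correct, and it takes a genuinely different route from the paper's. The paper reduces $c_{0,m,m}$ to $\bigl|(\Phi g^m - h)\cap(\Phi g^m + h)\bigr|$ with $h = 2^{-1}$ and then imports Lemma~4.3 and Theorem~4.4 of the Ke--Kiechle planar-nearrings paper \cite{ke_kiechle_jctsa}, which characterize when $\{-h,h\}$ is an ``odd edge'' of the graph $\Gamma(E_h^1)$; the equivalence with $h \in X_m$ is outsourced to that machinery. You instead stay entirely inside Lemma~\ref{le:fixkfirstcol}: from $2 \mid k$ you get $-1 \in \Gamma$, hence $-X_m = X_m$, which makes $z \mapsto 1-z$ an involution of $S_m = X_m \cap (X_m+1)$ whose unique fixed point in $\F_p$ is $2^{-1}$; parity of $|S_m|$ (equivalently, your two-element-set argument) together with the circularity bound $|S_m| \leq 2$ then gives the equivalence. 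Each step checks out, including the one delicate point that $2^{-1} \in X_m$ forces $2^{-1} \in S_m$ (via $2^{-1}-1 = -2^{-1} \in X_m$). What your approach buys is a self-contained, elementary proof that isolates exactly where each hypothesis enters ($2 \mid k$ for the involution, circularity only to cap $|S_m|$ at $2$); what the paper's approach buys is consistency with the nearring framework it uses again in part~(b) of the fixed-$k$ theorem, where the same citation handles the odd-$k$ case.
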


\begin{proof}
  Since $2 \mid (p-1)$, there is an $h \in \F_p^{\times}$ with $2h = 1$.
  As in the proof of Lemma~\ref{le:fixkfirstcol}, we have
 \begin{equation}
    c_{0,m,m} = t = |\Phi g^m \cap (\Phi g^m + 2h)| = |(\Phi g^m - h) \cap (\Phi g^m + h)|.
  \end{equation}
  Considering the discussion preceding the lemma, we must show that $\{-h, h\}$ is an odd edge in $E(G)$ if and only if $h \in X_m$.
  Note that $2 \mid k$ ensures that $-1 \in \Phi$ so that $h$ and $-h$ are vertices in $V(G)$.
  Moreover by \cite[Lem.~4.3,Thm.~4.4]{ke_kiechle_jctsa}, $\{-h, h\}$ is an odd edge in $V(G)$ if and only if $h \in \Phi g^m$. Since $X_m = g^m X_0 = \Phi g^m$, the lemma follows.
\end{proof}

With Lemma~\ref{le:fixedkdiag} and the fact that $c_{0,m,m} \in \{0, 1, 2\}$, we can now compute $V_4(p)$.

\subsection{Proof of Theorem \ref{thm:fixk}}
  Let $k = (p-1)/d$ and assume $(p, k)$ is circular. 
  
  \smallskip\noindent(a) Suppose $2 \mid k$; we must show that $V_4(p) = 3p(k-1) - k^3$.
  Examine the $(1,1)$ entry of $T^*T = UD^*DU^*$ and deduce that
 \begin{equation}
    \frac{p-1}{d} + (c_{0,0,0}^2 + \cdots + c_{0,d-1, 0}^2) = \frac{1}{p} \bigg( V_4(p) + \bigg( \frac{p-1}{d} \bigg)^3 \bigg) \,,
  \end{equation}
  from which we conclude
 \begin{equation}\label{eq:v4fixedksub}
    V_4(p) = p\big(k + (c_{0,0,0}^2 + \cdots + c_{0,d-1,0}^2)\big) - k^3.
  \end{equation}
 Lemma~\ref{le:colsum} gives
 \begin{equation}
    1 + \sum_{i=0}^{d-1} c_{0,i,0} = k \,,
  \end{equation}
 and by Lemma~\ref{le:fixedkdiag}, since $2 \mid k$ and $(p,k)$ is circular, there is only one $i$ such that 
 $0 \leq i \leq d-1$, and $c_{0,i,i} = 1$, where we have also used Lemma~\ref{le:Tsym1} part (b).
  Let $\gamma$ be the number of $2$s in the sequence $c_{0,0,0}, \ldots, c_{0,d-1,0}$.
  Lemma~\ref{le:colsum} ensures that $\gamma = (k-2)/2$, so
 \begin{equation}\label{eq:v4circlem10}
    \sum_{i=0}^{d-1} c_{0,i,0}^2 = 1 + 4\left( \frac{k-2}{2} \right) = 2k - 3.
  \end{equation}
 The desired formula now follows from \eqref{eq:v4fixedksub} and \eqref{eq:v4circlem10}:
 \begin{equation}
    V_4(p) = p(k + (2k-3)) - k^3 = 3p(k-1) - k^3.
  \end{equation}

  \noindent(b)  
 Suppose $2 \nmid k$; we must show that $V_4(p) = p(2k-1) - k^3$.
  Since $\gcd(2, k)=1$ and $k \mid (p-1)$, there is a subgroup $\Psi$ of $\F_p^\times$ of order $2k$.
  Because $\Phi \subseteq \Psi$, 
  \begin{equation*}
      ((\Phi g^m - h) \cap (\Phi g^m + h)) \subseteq ((\Psi g^m - h) \cap (\Psi g^m + h)).
  \end{equation*}
  In this case, we have assumed $(p,2k)$ is circular, so we may apply \cite[Thm.~4.4]{ke_kiechle_jctsa} to the graph of $ \{ \Psi g^m + b \mid b \in \Psi h \}$.
  If $(\Psi g^m -h) \cap (\Psi g^m +h) = \{a, b\}$ with $a \neq b$, so that $\{h, -h\}$ is an even edge, then \cite[Thm.~4.4]{ke_kiechle_jctsa} says we may take $a = g^m - h$ and $b = -g^m + h$.
  Since $2 \nmid k$, $\Phi$ has no elements of order $2$, so $-1 \not\in \Phi$, which implies $b \not\in ((\Phi g^m - h) \cap (\Phi g^m + h))$.
  Thus the cardinality of $((\Phi g^m - h) \cap (\Phi g^m + h))$ is at most $1$ and $c_{0,m,0} \in \{0, 1\}$ for $0 \leq m \leq d-1$.
  We conclude from Lemma~\ref{le:colsum} that
 \begin{equation}
    \sum_{i = 0}^{d-1} c_{0,i,0} = k-1,
    \qquad \text{so} \qquad
    \sum_{i=0}^{d-1} c_{0,i,0}^2 = k-1.
  \end{equation}
  The desired formula now follows from \eqref{eq:v4fixedksub}:
 \begin{equation*}
    V_4(p) = p(k + (k-1)) - k^3 =  p(2k-1) - k^3 .\qedhere
  \end{equation*}

  \section{Absolute moments for fixed \texorpdfstring{$d$}{d}}\label{Section:FixedD}
In this section, we assume that $d$ is fixed and
prove Theorem \ref{thm:v4d3} and \ref{thm:d4p1p8}.

\subsection{Proof of Theorem~\ref{thm:v4d3}}
Let $t_i = c_{0,i,i}$ for $0 \leq i \leq 2$.
Since $6 \mid (p-1)$, Lemma~\ref{le:Tsym1} ensures that $T$ is symmetric.
Thus, using parts (a) and (b) of Lemma~\ref{le:Tsym1}:
\begin{equation}\label{eq:Td3p4}
  T=
  \left[
    \begin{array}{ccc|c}
				t_0                   &  t_2        &  t_1        &  \sqrt{\frac{p-1}{3}}\\
				t_2                   &  t_1        &  c_{0,2,1}  &  0\\
				t_1                   &  c_{0,2,1}  &  t_2        &  0\\
				\hline
				\sqrt{\frac{p-1}{3}}  &  0          &  0          &  0\\
    \end{array}
  \right].
\end{equation}
To determine $c_{0,2,1}$, we use Lemma~\ref{le:colsum}, which gives 
\begin{equation*}
  t_0 + t_1 + t_2 + 1 = \frac{p-1}{d}
  \qquad \text{and} \qquad
  t_2 + t_1 + c_{0,2,1} = \frac{p-1}{d},
\end{equation*}
so that
\begin{equation}\label{eq:Td3p4_2}
  T=
  \left[
    \begin{array}{ccc|c}
				t_0                   &  t_2  &  t_1  &  \sqrt{\frac{p-1}{3}}\\
				t_2                   &  t_1  &  t_0  +  1                       &  0\\
				t_1                   &  t_0  +  1    &  t_2                     &  0\\
				\hline
				\sqrt{\frac{p-1}{3}}  &  0    &  0    &  0\\
    \end{array}
\right].
\end{equation}
This is sufficient to tackle the fourth absolute moment $V_4(p)$. 

\begin{proof}[Proof of Theorem~\ref{thm:v4d3}.]
 Examine the $(1,1)$ entry in $T^*T = UD^*DU^*$ and obtain
 \begin{equation} \label{eq:d3p4_1}
    t_0^2 + t_1^2 + t_2^2 + \frac{p-1}{3} = \frac{1}{p} \bigg( V_4(p) + \left( \frac{p-1}{3} \right)^3 \bigg).
  \end{equation}
 Computing the trace reveals that
 \begin{equation}
    \tr  T^*T = 3(t_0^2 + t_1^2 + t_2^2) + 2 \left( \frac{p-1}{3} \right) + 4t_0 + 2.
  \end{equation}
 Since the eigenvalues of $T^*T$ are the diagonal entries of $D^*D$, we also have
 \begin{align} \label{eq:d3p4eig}
    \tr  T^*T &= |\eta_0|^2 + |\eta_1|^2 + |\eta_2|^2 + \left(\frac{p-1}{3}\right)^2 \\
                    &= V_2(p) + \left(\frac{p-1}{3}\right)^2.
  \end{align}
 Using Theorem~\ref{thr:v2} and comparing \eqref{eq:d3p4_1} and \eqref{eq:d3p4eig} gives
 \begin{equation} \label{eq:d3v4}
    V_4(p) = \frac{1}{27}(10p^2 -20p + 1) - \frac{4}{3}pt_0.
  \end{equation}
 Since $t_0 = c_{0,0,0} = (M_{0,0,0} - 9)/9$
 by Lemma~\ref{le:T}, Hasse--Weil (Theorem \eqref{thm:hw}) yields
 \begin{equation} \label{eq:d3p4hasse}
    \frac{p - 2\sqrt{p} - 8}{9} \leq t_0 \leq \frac{p+2\sqrt{p} - 8}{9}
  \end{equation}
 because the Fermat curve $x^3 + y^3 = z^3$ has genus $1$.
  Using \eqref{eq:d3p4hasse} in \eqref{eq:d3v4} gives the desired inequalities. 
\end{proof}

Although the Hasse--Weil bound is, for elliptic curves in general, tight, it does not appear to be tight for modified Fermat curves of higher genus.
For example, A. Garc\'ia and Voloch improve Hasse's bound on these curves in some instances \cite{garcia_voloch}.

\subsection{Proof of Theorem~\ref{thm:d4p1p8}}
When $d = 4$, the form of the matrix $T$ in \eqref{eq:Tgp} depends on the least positive residue of $(p-1)/2$ modulo $4$, so there are two cases:
$p \equiv 1 \pmod 8$ or $p \equiv 5 \pmod 8$.

  \smallskip\noindent\textbf{Case 1.}
  Suppose that $p \equiv 1 \pmod 8$.  Then $T$ is symmetric by Lemma~\ref{le:Tsym1}.
  Let $t_i = c_{0,i,i}$ for $0 \leq i \leq 2$, where $c_{i,j,k}$ is as in Lemma~\ref{Lemma:SESUP}. Then
 \begin{equation}\label{eq:d4p4T}
    T  =
    \left[
      \begin{array}{cccc|c}
				t_0 & t_3 & t_2 & t_1 & \sqrt{\frac{p-1}{4}}\\
				t_3 & t_1 & t_x & t_x & 0\\
				t_2 & t_x & t_2 & t_x & 0\\
				t_1 & t_x & t_x & t_3 & 0\\
				\hline
				\sqrt{\frac{p-1}{4}} & 0 & 0 & 0 & 0\\
      \end{array}
    \right]
  \end{equation}
  with $t_x=c_{0,2,1}$ and where we used Lemma~\ref{le:Tsym1} to fill out the remaining entries.
  Lemma~\ref{le:colsum} gives the following system of equations:
 \begin{align*}
    t_0 + t_1 + t_2 + t_3 + 1 &= \frac{p-1}{4}, \\
    t_1 + t_3 + 2t_x &= \frac{p-1}{4}, \quad \text{and}\\
    2t_2 + 2t_x &= \frac{p-1}{4},
  \end{align*}
 from which it follows that
 \begin{equation}
    t_x = \frac{p + 7 + 8t_0}{24}.
  \end{equation}
 The trace of $T^*T$ is
 \begin{equation}\label{eq:d4p4trace}
    \tr  T^*T = t_0^2 + 3(t_1^2 + t_2^2 + t_3^2) + 6 \left(\frac{p+7+8t_0}{24}\right)^2 + \left( \frac{p-1}{2} \right) \,.
  \end{equation}
  Using eigenvalues of $T^*T$, we have
 \begin{align}\label{eq:d4p4eig}
    \tr  T^*T = V_2(p) + \left( \frac{p-1}{4} \right)^2 &= V_2(p) + \left(\frac{p-1}{4}\right)^2 \\
                                                              &= \frac{3p+1}{4} + \left(\frac{p-1}{4}\right)^2,
  \end{align}
 where we have used Theorem \ref{thr:v2}. Using \eqref{eq:d4p4trace} and \eqref{eq:d4p4eig}, we arrive at
 \begin{equation}\label{eq:d4p4x}
    t_0^2 + t_1^2 + t_2^2 + t_3^2 = \frac{1}{288} \big( 128t_0^2 - 16(7+p)t_0 + 5p^2 -2p+29\big).
  \end{equation}
 Computing $(1,1)$ entry on both sides of $T^*T = UD^*DU^*$ gives
 \begin{equation}\label{eq:d4p411}
    t_0^2 + t_1^2 + t_2^2 + t_3^2 + \frac{p-1}{4} = \frac{1}{p} \bigg( V_4(p) + \left( \frac{p-1}{4}\right)^3 \bigg).
  \end{equation}
 Finally, \eqref{eq:d4p4x} and \eqref{eq:d4p411} yield the exact formula
 \begin{equation}\label{eq:v4pd4}
    V_4(p) = \frac{1}{576} \big( 256pt_0^2 - (32p^2 + 224p)t_0 + p^3 + 167p^2 - 113p + 9 \big).
  \end{equation}
  Since $M_{0,0,0} = M_{4,1}$, the equality in the theorem follows from Lemma~\ref{le:T}, which gives $M_{0,0,0} = 16t_0 + 3d$.
  The lower bound \eqref{eq:LowerSpecial} follows from minimizing the quadratic in \eqref{eq:v4pd4}, which occurs at $t_0 = (p+7)/16$.

  \smallskip\noindent\textbf{Case 2.}
Suppose that $p \equiv 5 \pmod 8$.
  As in the proof of when $p \equiv 1 \pmod 8$, we let $t_i = c_{0,0,i}$ for $0 \leq i \leq 2$, where $c_{i,j,k}$ is as in Lemma~\ref{Lemma:SESUP}.
  In this case, Lemma~\ref{le:Tsym1} ensures that
 \begin{equation}\label{eq:d4p4T5}
    T  =
    \left[
      \begin{array}{cccc|c}
        t_0 & t_1 & t_2 & t_3 & 0\\
        t_x & t_x & t_3 & t_1 & 0\\
        t_0 & t_x & t_0 & t_x & \sqrt{\frac{p-1}{4}}\\
        t_x & t_3 & t_1 & t_x & 0\\
        \hline
        \sqrt{\frac{p-1}{4}} & 0 & 0 & 0 & 0\\
      \end{array}
    \right],
  \end{equation}
 where $t_x = c_{0,1,0}$.
 Comparing the $(1,1)$ entries in
 $TT^* = UDD^*U^*$
 gives
  \begin{equation}\label{eq:d4p411entry}
    t_0^2 + t_1^2 + t_2^2 + t_3^2 = \frac{1}{p} \bigg( V_4(p) + \left( \frac{p-1}{4} \right)^3 \bigg).
  \end{equation}
 On the other hand, take the trace in $TT^* = UDD^*U^*$ and deduce
 \begin{equation}\label{eq:d4p4tracep5}
    \frac{p-1}{2} + 6 t_x^2+3 t_0^2+3 t_1^2+t_2^2+3 t_3^2 = V_2(p) + \left( \frac{p-1}{4} \right)^2.
  \end{equation}
 The structure of $T$ above and Lemma~\ref{le:colsum} gives 
 \begin{align*}
    2t_0 + 2t_x + 1 &= \frac{p-1}{4}, \\
    2t_x + t_1 + t_3 &= \frac{p-1}{4}, \quad\text{and}\\
    t_0 + t_1 + t_2 + t_3 &= \frac{p-1}{4},
  \end{align*}
 from which we derive
 \begin{equation}\label{eq:d4p4tx}
    t_x = \frac{8t_2 + p - 5}{24}.
  \end{equation}
 Using \eqref{eq:d4p4tracep5} and \eqref{eq:d4p4tx}, and Theorem~\ref{thr:v2}, we obtain
 \begin{equation}
    t_0^2 + t_1^2 + t_2^2 + t_3^2 = \frac{1}{288} \big(128t_2^2 - 16(p-5)t_2 + 5p^2 + 22p + 53 \big).
  \end{equation}
 Compare this with \eqref{eq:d4p411entry} and get the exact formula
 \begin{equation}\label{eq:v4d4p5eq}
    V_4(p) = \frac{1}{576} \big(p^3+71 p^2+256 p t_2^2-32 (p-5) p t_2+79 p+9\big). \qed
  \end{equation}
  As in Case 1, Lemma~\ref{le:T} is used to provide $M_{0,0,2} = 16d$.
  Since $M_{4,2} = M_{0,0,2}$, the equality in the theorem follows.
  The lower bound \eqref{eq:LowerSpecial} follows from minimizing the quadratic in \eqref{eq:v4d4p5eq}, which occurs at $t_2 = (p-5)/16$.

\subsection{Proof of Theorem \ref{thm:all_d_intro}}
Suppose that $d > 2$ and let $\alpha$ denote the least positive residue of $(p-1)/2$ modulo $d$.
Thus, $\alpha = 0$ or $\alpha = d/2$, corresponding to whether $2d \mid (p-1)$ or not.
Let $\delta_{\alpha}$ equal $1$ if $2d \mid (p-1)$ and $0$ otherwise.

The proof of Theorem \ref{thm:all_d_intro} needs two more lemmas.

\begin{lemma}\label{le:v4eqgen}
  Let $d \mid (p-1)$. Then
  \begin{enumerate}
  	\item $ V_4(p) = p ( \frac{p-1}{d} ) + p\sum_{l=0}^{d-1} c_{0,l,0}^2 - ( \frac{p-1}{d} )^3$ and
  	\item $ V_4(p) = p ( \frac{p-1}{d} )\delta_{\alpha} + p\sum_{l=0}^{d-1} c_{0,0,l}^2 - ( \frac{p-1}{d} )^3$.
  \end{enumerate}
\end{lemma}

\begin{proof}
  (a) Lemma~\ref{Lemma:SESUP} implies the equality of
 \begin{equation}
    [T^*T]_{1,1} = \sum_{l=0}^{d-1} c_{0,l,0}^2 + \left( \frac{p-1}{d} \right)
  \end{equation}
 and 
 \begin{equation}
    [UD^*DU^*]_{1,1} = \frac{1}{p} \bigg( V_4(p) + \left( \frac{p-1}{d} \right)^3 \bigg).
  \end{equation}
 Thus,
 \begin{equation*}
    V_4(p) = p \left( \frac{p-1}{d} \right) + p \sum_{l=0}^{d-1} c_{0,l,0}^2 - \left( \frac{p-1}{d} \right)^3.
  \end{equation*}
  
  \noindent(b) This follows in a similar manner from $[TT^*]_{1,1} = [UDD^*U^*]_{1,1}$. \qedhere
\end{proof}

\begin{lemma} \label{le:trace}
	For $d > 2$, we have
	$\displaystyle\sum_{m=0}^{d-1} \sum_{n=0}^{d-1} c_{0,m,n}^2 = \frac{p^2 + (d^2-3d-2)p + 3d + 1}{d^2}$.
\end{lemma}

\begin{proof}
	Compute the trace of $T^*T$ from its matrix entries:
	\begin{equation}
		\tr T^*T = \sum_{m=0}^{d-1} \sum_{n=0}^{d-1} c_{0,m,n}^2 + 2 \left( \frac{p-1}{d} \right) \,,
	\end{equation}
	and from its eigenvalues:
	\begin{equation*}\small
		\tr T^*T 
		= \sum_{\ell=0}^{d-1} |\eta_\ell|^2 + \bigg( \frac{p-1}{d} \bigg)^2 = V_2(p) + \bigg( \frac{p-1}{d} \bigg)^2 = \frac{(d-1)p + 1}{d} + \bigg( \frac{p-1}{d} \bigg)^2 .
	\end{equation*}
	Now compare the results.
\end{proof}

We are now ready to complete the proof of Theorem~\ref{thm:all_d_intro}:

\smallskip\noindent(a)
	From Lemma~\ref{le:v4eqgen},
	\begin{equation*}
		V_4(p) = p \left( \frac{p-1}{d} \right)\delta_\alpha + p \sum_{l=0}^{d-1} c_{0,0,l}^2 - \left( \frac{p-1}{d} \right)^3.
	\end{equation*}
	Cauchy--Schwarz and Lemma~\ref{le:colsum} ensure that
	\begin{equation*}
		\sum_{l=0}^{d-1} c_{0,0,l}^2 
		\geq \frac{1}{d} \bigg( \sum_{l=0}^{d-1} c_{0,0,l} \bigg)^2 
		= \frac{1}{d} \left( \frac{p-1}{d} - \delta_\alpha \right)^2 
		= \frac{(p-1-\delta_\alpha d)^2}{d^3}.
	\end{equation*}
	This yields the desired result:
	\begin{align*}
		V_4(p) 
		\geq p\left( \frac{p-1}{d} \right) + p\left(\frac{(p-1-\delta_\alpha d)^2}{d^3}\right) - \left( \frac{p-1}{d} \right)^3 
		=  \frac{((\delta_\alpha d-1)p+1)^2}{d^3} . 
	\end{align*}

\smallskip\noindent(b) Lemma~\ref{le:v4eqgen} gives
    \begin{equation}\label{eq:pv4p}
		V_4(p) = p \left( \frac{p-1}{d} \right) + p \sum_{l=0}^{d-1} c_{0,l,0}^2 - \left( \frac{p-1}{d} \right)^3. 
    \end{equation}
    Define 
    \begin{equation*}
    K = \frac{1}{d^2} (p^2 + (d^2-3d-2)p + 3d + 1)
    = k^2+ (d-3) k+1 \in \Z
    \end{equation*}
to ease notation.
    We start with Lemma~\ref{le:trace} and make repeated use of Lemma~\ref{le:Tsym1}.
    All sums run over $\{0, 1,\ldots, d-1\}$, unless otherwise indicated, with equality of indices taken modulo $d$.  Then
    \begin{align}
      \sum_{m} c_{0,0,m}^2 
	&= K - \sum_{m \neq 0} \sum_{n} c_{0,m,n}^2  = K - \sum_{m \neq 0} \sum_{n \neq \alpha} c_{0,m,n}^2 - \sum_{m \neq 0} c_{0,m,\alpha}^2 \\
	&= K - \sum_{m \neq 0} \sum_{n \neq \alpha} c_{0,m,n}^2 - \sum_{m} c_{0,m,\alpha}^2 + c_{0,0,\alpha}^2 = \frac{K}{2} - \frac{1}{2} \sum_{m \neq 0} \sum_{n \neq \alpha} c_{0,m,n}^2 + \frac{c_{0,0,\alpha}^2}{2} \\
	&= \frac{K}{2} - \frac{1}{2} \sum_{n \neq \alpha} \sum_{\substack{m \neq 0 \\ m \neq n - \alpha}} c_{0,m,n}^2 - \frac{1}{2} \sum_{n \neq \alpha} c_{0,n-\alpha, n}^2 + \frac{c_{0,0,\alpha}^2}{2} \\
	&= \frac{K}{2} - \frac{1}{2} \sum_{n \neq \alpha} \sum_{\substack{m \neq 0 \\ m \neq n - \alpha}} c_{0,m,n}^2 - \frac{1}{2} \sum_{n} c_{0,n-\alpha, n}^2 + c_{0,0,\alpha}^2 \\
	&= \frac{K}{3} - \frac{1}{3} \sum_{n \neq \alpha} \sum_{\substack{m \neq 0 \\ m \neq n - \alpha}} c_{0,m,n}^2 + \frac{2c_{0,0,\alpha}^2}{3} \,. \label{eq:p41}
    \end{align}

	Substitute  \eqref{eq:p41} into \eqref{eq:pv4p} and use Cauchy--Schwarz to obtain
	{\small
	\begin{align*}
		V_4(p) 
		&= \frac{( (d-3)p^3 + (3d^2 (\delta_\alpha - 1) + d^3 - 2d + 9)p^2 - (3d^2 (\delta_\alpha -1) - d + 9)p + 3)}{3d^3} \\
		& \qquad + \frac{2pc_{0,0,\alpha}^2}{3} - \frac{p}{3} \sum_{n \neq \alpha} \sum_{\substack{m \neq 0 \\ m \neq n - \alpha}} c_{0,m,n}^2 \\
		&\leq \frac{( (d-3)p^3 + (3d^2 (\delta_\alpha - 1) + d^3 - 2d + 9)p^2 - (3d^2 (\delta_\alpha -1) - d + 9)p + 3)}{3d^3} \\
		&\qquad + \frac{2pc_{0,0,\alpha}^2}{3} - \frac{p}{3(d-1)(d-2)} \Bigg( \sum_{n \neq \alpha} \sum_{\substack{ m \neq 0 \\ m \neq n - \alpha  }} c_{0,m,n} \Bigg)^2 \,. \label{eq:pv4divcs}
	\end{align*}
}
	Using Lemma~\ref{le:colsum} and the fact that
	$c_{0,d,n} = 0$ for $n \neq 0$, we have
	\begin{align*}
	    \sum_{n \neq \alpha} \sum_{\substack{m \neq 0 \\ m \neq n - \alpha}} c_{0,m,n} &= \sum_m \sum_n c_{0,m,n} - \sum_m c_{0,m,\alpha} - \sum_n c_{0,n-\alpha,n} + 2c_{0,0,\alpha} \\
	    &= \left( \frac{(d-1)(p-1)}{d} - (1-\delta_{\alpha, 0}) \right) - \left( \frac{p-1}{d} - \delta_{\alpha, 0} \right) \\
	    &\qquad\qquad - \left( \frac{p-1}{d} - \delta_{\alpha, 0} \right) + 2c_{0,0,\alpha} \\
	    &= \left( \frac{(d-3)(p-1)}{d} \right) + (3\delta_{\alpha, 0}-1) + 2c_{0,0,\alpha} \,.
	\end{align*}

	Finally, we have the bound
	{\small
	\begin{align*}
		V_4(p) 
		&\leq \frac{( (d-3)p^3 + (3d^2 (\delta_\alpha - 1) + d^3 - 2d + 9)p^2 - (3d^2 (\delta_\alpha -1) - d + 9)p + 3)}{3d^3} \\
		&\qquad + \frac{2pc_{0,0,\alpha}^2}{3} - \frac{p}{3(d-1)(d-2)} \Bigg( \left( \frac{(d-3)(p-1)}{d} \right) + (3\delta_{\alpha, 0}-1) + 2c_{0,0,\alpha}\Bigg)^2 \,.
	\end{align*}
	}

	Lemma~\ref{eq:fermatcong} and Theorem~\ref{thm:hw} give
	\begin{equation}
		\abs{d^2 c_{0,0,\alpha} + 3d \delta_\alpha - (p+1)} \leq (d-1)(d-2) \sqrt{p},
	\end{equation}
	so 
	\begin{equation}\small
		\frac{ (p+1) - 3d \delta_\alpha- (d-1)(d-2) \sqrt{p}}{d^2} \leq c_{0,0,\alpha} \leq \frac{ (p+1) - 3d \delta_\alpha + (d-1)(d-2) \sqrt{p}}{d^2}. \label{eq:v4pmidhw}
	\end{equation}
	Maximizing our bound subject to this restriction and considering the cases $\alpha = 0$ and $\alpha = d/2$ separately 
	yields the desired result. \qed

\bibliography{GPM}
\bibliographystyle{amsplain}

\end{document}